\date{}
\newtheorem{thm}{Theorem}[section]
\newtheorem{lem}[thm]{Lemma}
\newtheorem{rem}[thm]{Remark}
\newtheorem{de}[thm]{Definition}
\numberwithin{equation}{section}
\newcommand{\Z}{{\mathbb{Z}}}
\newcommand{\R}{{\mathbb{R}}}
\newcommand{\C}{{\mathbb{C}}}
\newcommand{\T}{{\mathbb{T}}}
\begin{document}
\setlength{\unitlength}{1cm}
\vskip1.5cm

\centerline {\bf Spectral Envelopes - A Preliminary Report}

\vskip.8cm \centerline {\textbf{Wayne Lawton$^1$}\footnote{ Corresponding author e-mail : {\tt scwlw@mahidol.ac.th}\\
\\ {Copyright \copyright\, 2012  by the AMM2012. All rights reserve.}}}

\vskip.5cm

\centerline {$^1$Department of Mathematics, Faculty of Science}
\centerline {Mahidol University, Rama 6 Road, Bangkok 10400, THAILAND}


\vskip.5cm \hskip-.5cm{\small{\bf Abstract :}
The spectral envelope $S(F)$ of a subset $F$
of integers is the set of probability measures on the circle group $\T = \R/\Z$ that are weak$^{*}$ limits of squared moduli of norm one trigonometric polynomials having frequencies in $F.$ Clearly, if $\mu \in S(F)$ then the support of its Fourier transform $\widehat \mu$ is contained in the difference set $F - F.$ However, the converse generally fails and an explicit characterization of the elements in $S(F)$ is extremely elusive. We associate to $F$ a symbolic dynamical system $(\Omega(F),\sigma),$ where $\sigma$ is the shift homeomorphism on the product space $\{0,1\}^{\Z}$ and $\Omega(F)$ is the closure of the orbit under $\sigma$ of the characteristic function $\chi_{F}$ of $F.$ Analytic properties of $S(F)$ are related to dynamical properties of the binary sequence $\chi_F.$ The Riemann-Lebesque lemma implies that if $\chi_F$ is recurrent, or more specifically minimal, then $S(F)$ is convex and hence, by the Krein-Milman theorem, $S(F)$ equals the closure of the convex hull of its set of extreme points $S_e(F).$ In this paper we
(i) review the relationship between these concepts and the special case of the still open 1959 Kadison-Singer problem called Feichtinger's conjecture for exponential functions,
(ii) derive partial characterizations of elements in $S_e(F),$ for minimal $\chi_F,$ in terms of ergodic properties of $(\Omega(F),\lambda,\sigma),$ where $\lambda$ is a $\sigma-$ invariant probability measure whose existence is ensured by the 1937 Krylov-Bogolyubov theorem,
(iii) refine previous numerical studies of the Morse-Thue minimal binary sequence by exploiting a new MATLAB algorithm for computing smallest eigenvalues of $4,000,000 \times 4,000,000$ matrices,
(iv) describe recent results characterizing $S(F)$ for certain Bohr sets $F$ related to quasicrystals,
(v) extend these concepts to general discrete groups including those with Kazhdan's T-property, such as $SL(n,Z), n \geq 3,$ which can be characterized by several equivalent properties
such as: any sequence of positive definite functions converging to 1 uniformly on compact subsets converges uniformly. This exotic property may be useful to construct a counterexample to the generalization of Feichtinger's conjecture and hence provide a no answer to the question of Kadison and Singer which they themselves tended to suspect.

\vskip0.3cm\noindent {\bf Keywords :} spectral envelope; symbolic dynamical system; Kadison-Singer problem.

\noindent{\bf 2000 Mathematics Subject Classification : } 37B10; 42A55; 43A35

\section{Introduction}
We let $\Z, \, \R$ and $\C$ denote the integer, real, and complex numbers and $\T = \R/\Z$ denote the circle group.
For any subset $F \subseteq \Z$ we define the set of trigonometric polynomials
$$
	P(F) = \{\, p \, : \, \T  \rightarrow \C \, : \, p(x) = \sum_{k \in F} \, c_k \, e^{2\pi i kx}, \ x \in \T, \ c_k \in \C , \
        \int_{x \in \T} |p(x)|^2 \, dx = 1 \ \}
$$
with norm $1$ and whose frequencies are in $F,$ and we define the set of probability measures
$$
	M(F) = \{\, \hbox{probability measures } \mu  \hbox{ on } \T \, :
\, k \notin F \implies \widehat \mu(k) = \int_{x \in \T} e^{-2\pi i k x} \, d\mu(x) = 0 \  \}
$$
whose frequencies are in $F.$ We identify absolutely continuous measures on $\T$ with their Radon-Nikodym derivatives and define
$
    s\, : \, P(F) \rightarrow M(F-F)
$
by
$
    s(p) = |p|^2, \ \ p \in P(F).
$
\begin{de}
The {\bf spectral envelope} $S(F)$ is the weak$^{\, *}$ closure of $s(P(F)).$
\end{de}
\begin{lem}\label{lem1}
For $F \subseteq \Z,$ $m \in \Z,$ and $\tau \in \T$ the following equalities hold:
$S(F+m) = S(F),$ $S(F) + \tau = S(F),$ $M(F) + \tau = M(F),$ and $S(F) \subseteq M(F-F).$
$M(F)$ is an ideal in $M(\Z).$
If the cardinality $|F|$ of $F$ is finite
then $s(P(F)) = S(F),$ $\dim \, M(F-F) = |F-F|,$ and $\dim \, S(F) \leq 2|F|-1.$
\end{lem}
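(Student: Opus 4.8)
The plan is to dispatch the translation, containment, and ideal assertions by direct Fourier bookkeeping, and then to concentrate on the finite-cardinality claims, where the only genuine content lies.

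First, the invariances. Given $p(x)=\sum_{k\in F}c_k e^{2\pi i kx}\in P(F)$, the modulated polynomial $e^{2\pi i mx}p(x)=\sum_{k\in F}c_k e^{2\pi i(k+m)x}$ lies in $P(F+m)$ and satisfies $|e^{2\pi i mx}p(x)|^2=|p(x)|^2$; hence $s(P(F+m))=s(P(F))$, and passing to weak$^*$ closures gives $S(F+m)=S(F)$. For the circle translations I would use that translating a measure by $\tau$ multiplies $\widehat\mu(k)$ by the unimodular factor $e^{-2\pi i k\tau}$: this preserves the support condition $k\notin F\Rightarrow\widehat\mu(k)=0$, yielding $M(F)+\tau=M(F)$, and applied to $p$ it replaces each $c_k$ by $c_k e^{-2\pi i k\tau}$ (so $p$ stays in $P(F)$) while translating $|p|^2$, yielding $S(F)+\tau=S(F)$ after taking the closure, since translation is weak$^*$ continuous. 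The inclusion $S(F)\subseteq M(F-F)$ follows from $\widehat{|p|^2}(n)=\sum_{k-l=n,\,k,l\in F}c_k\overline{c_l}$, which vanishes unless $n\in F-F$; I then note that $M(F-F)$ is weak$^*$ closed, being the intersection, inside the weak$^*$ compact set of probability measures, of the weak$^*$ continuous conditions $\widehat\mu(k)=0$, so the closure $S(F)$ remains inside it. The ideal assertion is just the convolution theorem: read in the convolution algebra of finite measures, for $\mu\in M(F)$ and $\nu\in M(\Z)$ one has $\widehat{\mu*\nu}(k)=\widehat\mu(k)\,\widehat\nu(k)$, which is $0$ whenever $k\notin F$, and $\mu*\nu$ is again a probability measure; thus $M(\Z)*M(F)\subseteq M(F)$, the absorption property defining an ideal.

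Now suppose $N=|F|<\infty$. By Parseval the map $c=(c_k)_{k\in F}\mapsto p_c$ identifies $P(F)$ with the unit sphere $S^{2N-1}$ of $\C^N$, which is compact, and $s:\,c\mapsto|p_c|^2$ is a polynomial (hence continuous) map into the finite-dimensional, metrizable space $M(F-F)$. Therefore $s(P(F))$ is weak$^*$ compact, hence closed, so it already equals its closure $S(F)$; this gives $s(P(F))=S(F)$. For $\dim M(F-F)=|F-F|$ I would use that a measure with Fourier transform supported in the finite set $F-F$ is absolutely continuous with density the trigonometric polynomial $\sum_{n\in F-F}\widehat\mu(n)e^{2\pi i nx}$; the coefficients $\widehat\mu(n)$, $n\in F-F$, are the free parameters, so $M(F-F)$ fills out an $|F-F|$-dimensional space of measures.

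Finally, for $\dim S(F)\le 2|F|-1$ I observe that $S(F)=s(P(F))$ is the image of the $(2N-1)$-dimensional sphere $P(F)$ under the map $s$. The point I expect to be the main obstacle is precisely here: one may not invoke mere continuity, since continuous images can raise topological dimension (space-filling curves). Instead I would use that $s$, being polynomial on a compact manifold with values in a finite-dimensional space, is locally Lipschitz; a Lipschitz map does not increase Hausdorff dimension, and topological dimension is dominated by Hausdorff dimension for subsets of Euclidean space. Hence $\dim S(F)\le\dim P(F)=2N-1=2|F|-1$. Everything outside this last estimate is formal manipulation of Fourier coefficients together with the compactness of the coefficient sphere.
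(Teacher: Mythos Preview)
Your proof is correct. The first several assertions the paper simply labels ``obvious'' and gives no argument, so your explicit Fourier bookkeeping is at least as complete as the original. The only place the two diverge is the final bound $\dim S(F)\le 2|F|-1$: the paper appeals to Sard's theorem in the refined form of his 1965 paper on Hausdorff measure of critical images, whereas you use the elementary fact that a Lipschitz map (here the quadratic map $c\mapsto|p_c|^2$ restricted to the compact sphere) cannot raise Hausdorff dimension, together with $\dim_{\mathrm{top}}\le\dim_H$ for Euclidean subsets. Both routes yield the same inequality; yours is more self-contained and avoids the heavier Sard machinery, while the paper's citation has the advantage of pointing at a general-purpose theorem that handles smooth maps between manifolds without needing to check Lipschitz bounds by hand. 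Either way the substantive content is the same observation: the image of a $(2|F|-1)$-dimensional sphere under a smooth map into a finite-dimensional target cannot have dimension exceeding that of the source.
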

\begin{proof}
The first assertions are obvious, the last is implied by
Sard's theorem \cite{S42}, \cite{SS42}.
\end{proof}
\begin{lem}\label{lem2}
If $F$ is the set of points in an arithmetic sequence then $S(F) = M(F-F).$
\end{lem}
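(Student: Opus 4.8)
The plan is to reduce the general arithmetic sequence to the case of consecutive integers and then invoke the Fejér--Riesz factorization theorem. Write the sequence as $F = \{a + kd : k \in I\}$, where $d \geq 1$ and $I$ is one of $\{0,1,\dots,n-1\}$, $\N$, or $\Z$. By the translation invariance $S(F+m) = S(F)$ and $M(F)+\tau = M(F)$ from Lemma~\ref{lem1}, together with $S(F)\subseteq M(F-F)$, it suffices to prove the reverse inclusion $M(F-F)\subseteq S(F)$ after normalizing $a=0$, so that $F = dG$ with $G = I$.

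First I would reduce the common difference $d$ to $1$ using the $d$-fold covering map $\pi_d\colon \T\to\T$, $\pi_d(x)=dx$. Since $\pi_d$ preserves normalized Haar measure, the map $p\mapsto p\circ\pi_d$ is a bijection of $P(G)$ onto $P(F)$, and because $s(p\circ\pi_d)=s(p)\circ\pi_d$, it carries $s(P(G))$ onto $s(P(F))$ via the pullback $\pi_d^{*}$ on densities. On Fourier coefficients one checks $\widehat{\pi_d^{*}\nu}(jd)=\widehat\nu(j)$ and $\widehat{\pi_d^{*}\nu}(k)=0$ for $d\nmid k$, so $\pi_d^{*}$ maps $M(G-G)$ into $M(d(G-G))=M(F-F)$. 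Conversely, any $\mu\in M(F-F)$ has $\widehat\mu$ supported in $d(G-G)\subseteq d\Z$, hence is $\tfrac1d$-periodic and equals $\pi_d^{*}\nu$ for $\nu=(\pi_d)_{*}\mu\in M(G-G)$; thus $\pi_d^{*}$ is a weak$^{*}$ homeomorphism of $M(G-G)$ onto $M(F-F)$. Taking closures, $S(F)=\pi_d^{*}S(G)$ and $M(F-F)=\pi_d^{*}M(G-G)$, so the identity for $F$ follows from the identity for $G$, reducing everything to $d=1$.

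For the case $d=1$ and finite $I=\{0,\dots,n-1\}$, the set $F-F=\{-(n-1),\dots,n-1\}$, and any $\mu\in M(F-F)$ is band-limited, hence absolutely continuous with density a nonnegative trigonometric polynomial $f$ of degree at most $n-1$ satisfying $\int_\T f = \widehat\mu(0)=1$. The Fej\'er--Riesz factorization theorem then yields $f=|p|^2$ with $p(x)=\sum_{k=0}^{n-1}c_k e^{2\pi i kx}$, and $\int_\T|p|^2 = 1$ gives $p\in P(F)$, so $\mu = s(p)\in s(P(F)) = S(F)$, the last equality by the finite-cardinality clause of Lemma~\ref{lem1}. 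For infinite $I$ (so $G=\N$ or $\Z$ and $F-F=\Z$), I would instead approximate: given any probability measure $\mu$, its Fej\'er means $\sigma_N\ast\mu$ are nonnegative trigonometric polynomials of degree $N$ with integral $1$, hence equal $|p_N|^2$ with $p_N\in P(\{0,\dots,N\})\subseteq P(G)$; since $\sigma_N\ast\mu\to\mu$ weak$^{*}$, we get $\mu\in S(F)$, giving $M(\Z)\subseteq S(F)$.

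The main obstacle is the $d=1$ finite case, where the entire content is the Fej\'er--Riesz theorem; everything else is bookkeeping. I would want to verify carefully that the dilation $\pi_d^{*}$ genuinely intertwines the closures (so that $\overline{\pi_d^{*}s(P(G))}=\pi_d^{*}\overline{s(P(G))}$), which rests on $\pi_d^{*}$ being a weak$^{*}$ homeomorphism, and that band-limited probability measures are indeed nonnegative trigonometric polynomials of the stated degree, justifying the application of Fej\'er--Riesz.
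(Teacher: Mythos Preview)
Your proposal is correct and follows essentially the same route as the paper: the core step is the Fej\'er--Riesz factorization applied to the finite block of consecutive integers, and the paper likewise reduces to that case and then remarks that the result ``easily extends'' to arithmetic sequences with gap larger than $1$. You simply make that extension explicit via the dilation $\pi_d$, and you additionally treat the infinite-progression case by Fej\'er-mean approximation, which the paper does not spell out.
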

\begin{proof}
If $J \in \Z, d \geq 0,$ and $F = \{J,J+1,...,J+d\}$ then
$F-F = \{ \, -d,...,d \, \}$
and the Fej\'{e}r-Riesz spectral factorization lemma (\cite{RN55}, p. 117),
conjectured by Fej\'{e}r \cite{F15} and proved by Riesz \cite{R15}, implies
that $S(F) = M(F-F).$ Furthermore $S(F)$ contains the Fej\'{e}r kernel
$$
    K_d(x) = \left| \, \frac{1}{\sqrt {d+1} } \sum_{j=0}^{d} e^{\, 2\pi i j x} \, \right|^{\, 2}, \ \ x \in \T
$$
which converges weak$^{\, *}$ to Dirac's measure $\delta \in M(\Z)$ as $d \rightarrow \infty.$
These results easily extend to arithmetic sequences whose gap between consecutive points is larger than $1.$
\end{proof}
\noindent In general if $\mu \in M(\{-d,...,d\})$ there are $2^d$ distinct polynomials $p \in P(\{0,...,d\})$
that satisfy $p(0) > 0$ and $|p|^2 = \mu.$ The problem of computing $p$ from $|p|$ arises in imaging applications,
such as X-ray crystallography, speckle interferometry and holography, where it is called the phase retrieval problem
\cite{FW86}. If $p$ and $q$ are multidimensional trigonometric polynomials or entire functions of exponential type
and $|p| = |q,$ then generally either $q = cp$ or $q = c\overline p$ where $c \in \C$ and $|c| = 1,$ \cite{LM87}.
We propose the following inverse problem:
\begin{rem}\label{rem1}
For $F \subset \Z$ and $\mu \in s(P(F))$ efficiently compute $p \in P(F)$ such that $|p|^2 = \mu.$
\end{rem}
\noindent For an arbitrary subset $F \subseteq \Z$ we define the following analogue of the Fej\'{e}r kernels
$$
    K_d(F)(x) = \left| \, \frac{1}{\sqrt {|F\cap \{-d,...,d\}|} } \sum_{j \in F\cap \{-d,...,d\}} e^{\, 2\pi i j x} \, \right|^{\, 2}, \ \ x \in \T, \ d \geq 0.
$$
If $A \subseteq \T$ is Lebesque measurable and has measure $\lambda(A) \in (0,1]$ and $\alpha > 0$ is irrational, then Birkoff's ergodic theorem
\cite{B31} implies that for almost all $\beta \in \T$ the set
$$
    F = \{ \, n \in \Z \, : \, n \, \alpha + \beta \in A \, \}
$$
has density $\lambda(A).$ In this case we have the following weak$^{\, *}$ limit
$$
    \lim_{d \rightarrow \infty} K_d(F)(x) = \frac{1}{\lambda(A)} \, \sum_{j \in \Z} |\widehat \chi_A(j)|^{\, 2} \, \delta(j \, \alpha + x).
$$
If $A$ is a nonempty open set then $F$ is a special case of sets named after Harald Bohr, who
initiated the theory of uniformly almost periodic functions \cite{BO52}, and whose number theoretic
properties were studied by Ruzsa (\cite{GR09}, Definition 2.5.1). Then $\chi_F$ is a minimal binary sequence.
Another example of a minimal binary sequences was constructed by Thue \cite{TH06} and Morse \cite{MO21}. If $F$ is the support of the
Morse-Thue minimal sequence then $K_d(F)$ converges to a measure with no discrete (delta) function components but whose support has Lebesque
measure $0$ \cite{BG08}. This sequence describes the structure of quasicrystals that have been fabricated in laboratories and the measure describes the observed X-ray diffraction spectra of these quasicrystals \cite{N04}. 
\section{Kadison-Singer Problem}
\noindent In 1959 Kadison and Singer formulated a yet unsolved problem \cite{KS59} that emerged from Dirac's formulation of
quantum mechanics \cite{D30}. Casazza and Tremain \cite{CA06} proved that a yes answer to their problem is equivalent to the
Feichtinger conjecture. A special case of the Feichtinger conjecture is called the Feichtinger conjecture for exponentials. 
The deepest results in this area are arguably those of Bourgain and Tzafriri \cite{BT87}, \cite{BT91} and the recent results of
Spielman and Srivastava \cite{SS12}.
Independently and using different methods, Paulson \cite{PA11} and I \cite{LA10} derived a relationship between the Feichtinger 
conjecture for exponentials with the concepts of syndetic sets and minimal sequences that are central to topological dynamics \cite{G46}, \cite{GH55}. 
In \cite{LA10} I derived results related to sampling as studied by Olevskii and Ulanovskii in \cite{O08}.
In \cite{LA11} I formulated the concept of spectral envelopes and derived its relationships to Feichtingers conjecture for exponentials,
presented preliminary numerical results, and outlined approaches to relate spectral envelopes to the spectral theory based on the theory developed by
Queffelec in \cite{Q87}.
\section{Characterizing Spectral Envelopes}
\noindent In \cite{LA12} I derived a result that easily implies that $S(F) = M(F-F)$ for a certain family of Bohr sets $F.$ 
Future work will attempt to extending this result by using the seminal results of Erd\"{o}s and Tur\'{a}n in \cite{ET50} concerning the discrepancy of roots of polynomials.

\section{Kazhdan's Property (T) Groups}
\noindent It is easy to give an equivalent definition of spectral envelopes of subsets of any discrete group. Future work will attempt to
generalize Feichtinger's conjecture for exponentials in this context and to exploit the exotic properties of groups, such as 
$SL(n,\Z), \, n \geq 3,$ that enjoy Kazhdan's Property (T) \cite{BHV08}, to construct a counterexample to this generalization thus showing that the answer to the Kadison-Singer problem is "no".

\end{document}